\newtheorem{theorem}{Theorem}[section]
\theoremstyle{definition}
\newtheorem{remark}[theorem]{Remark}
\newcommand{\C}{\ensuremath{\mathbb{C}}}
\newcommand{\R}{\ensuremath{\mathbb{R}}}
\renewcommand{\H}{\ensuremath{\mathbb{H}}}
\newcommand{\g}[1]{\ensuremath{\mathfrak{#1}}}
\newcommand{\cal}[1]{\ensuremath{\mathcal{#1}}}
\newcommand{\II}{\ensuremath{I\!I}}
\DeclareMathOperator{\codim}{codim}
\begin{document}
\title[Canonical extension of submanifolds and foliations]{Canonical extension of submanifolds and foliations\\ in noncompact symmetric spaces}

\author[M.~Dom\'{\i}nguez-V\'{a}zquez]{Miguel Dom\'{\i}nguez-V\'{a}zquez}
\address{Instituto Nacional de Matem\'atica Pura e Aplicada (IMPA), Rio de Janeiro, Brazil.}
\email{mvazquez@impa.br}

\thanks{The author has been supported by a fellowship at IMPA (Brazil), and projects EM2014/009, GRC2013-045 and MTM2013-41335-P
with FEDER funds (Spain)}

\subjclass[2010]{Primary 53C42; Secondary 57S20, 53C35, 53C12}


\begin{abstract}
We propose a method to extend submanifolds, singular Riemannian foliations and isometric actions from a boundary component of a noncompact symmetric space to the whole space. This extension method preserves minimal submanifolds, isoparametric foliations and polar actions, among other properties. One of the several applications yields the first examples of inhomogeneous isoparametric hypersurfaces in noncompact symmetric spaces of rank at least two.
\end{abstract}

\keywords{Polar action, minimal submanifold, noncompact symmetric space, isoparametric hypersurface, isoparametric foliation}

\maketitle

\section{Introduction}

A proper isometric action of a Lie group $H$ on a Riemannian manifold $\bar{M}$ is \emph{polar} if there is an immersed connected submanifold $\Sigma$ intersecting all $H$-orbits and always orthogonally. Such submanifold $\Sigma$ turns out to be totally geodesic and is called a \emph{section} of the action. 

In this paper we consider a natural method to enlarge submanifolds from a section $\Sigma$ of a polar $H$-action on $\bar{M}$ to the whole ambient manifold $\bar{M}$, simply by attaching the $H$-orbits to the original submanifold. This procedure can also be applied to extend singular Riemannian foliations or isometric actions on $\Sigma$ to $\bar{M}$, under some mild assumptions.

Our first goal is the observation that, if the $H$-orbits are minimal submanifolds, then several important properties on submanifolds and foliations are preserved by this canonical extension method. Such properties include, for instance, submanifolds with parallel mean curvature, minimal submanifolds and isoparametric foliations.

Our second goal is to apply this method on symmetric spaces of noncompact type and rank higher than one. The reason why we can do this stems from the \emph{horospherical decomposition} of a noncompact symmetric space. More specifically, any given subset $\Phi$ of simple roots of a noncompact symmetric space $\bar{M}=G/K$ determines a totally geodesic submanifold $B_\Phi$ (which is itself a symmetric space of lower rank), an abelian subgroup $A_\Phi$ of $G$ and a nilpotent subgroup $N_\Phi$ of $G$ such that the solvable group $A_\Phi N_\Phi$ acts freely and polarly on $\bar{M}$ with section $B_\Phi$, and all orbits are minimal submanifolds and Einstein solvmanifolds. Thus, the extended examples in this setting are $A_\Phi N_\Phi$-equivariant.

Our work fits within the project of constructing examples of interesting geometric objects with ``lots of symmetry". In the context of submanifold theory, this project can be traced back at least to the influential work of Hsiang and Lawson~\cite{HL:JDG}. But more specifically, our results constitute a generalization of the canonical extension method for cohomogeneity one actions recently given by Berndt and Tamaru~\cite{BT:crelle}, and show that such canonical extension is of deeper nature and also works for isoparametric foliations and polar actions, among others. Moreover, when applied to isoparametric hypersurfaces, our method can be seen as a generalization of the construction of inhomogeneous isoparametric hypersurfaces in rank one noncompact symmetric spaces proposed by D\'iaz-Ramos and the author~\cite{DD:MZ}, \cite{DD:adv}. 

We present several concrete applications of our method. First, we produce new examples of inhomogeneous isoparametric families of hypersurfaces in symmetric spaces with restricted root system of $(BC_r)$-type, such as the complexified Cayley hyperbolic plane or the indefinite complex and quaternionic Grassmannians. Based on the existence of inhomogenous isoparametric foliations of higher codimension on real hyperbolic spaces, we construct inhomogeneous isoparametric foliations of codimension greater than one on the indefinite real Grassmannians. Our method also provides examples of polar non-hyperpolar actions (on noncompact symmetric spaces of rank higher than one) different from the ones introduced in~\cite{BDT:JDG}, and examples of minimal and constant mean curvature hypersurfaces of cohomogeneity one in many symmetric spaces.

This article is organized as follows. In Section~\ref{sec:extension} we prove the canonical extension theorem for polar actions with minimal orbits. Section~\ref{sec:horospherical} introduces the polar action associated with a horospherical decomposition of a noncompact symmetric space. Finally, in Section~\ref{sec:applications} we present several applications of our method.

\section{Canonical extension for polar actions with minimal orbits} \label{sec:extension}
In Theorem~\ref{th:polar} below we prove the canonical extension theorem for a polar action with minimal orbits. For simplicity, we additionally assume that the action is free and each orbit meets the section only once. With a view to the application to noncompact symmetric spaces, these assumptions are natural, as follows from Section~\ref{sec:horospherical}. However, Theorem~\ref{th:polar} remains true under slightly weaker conditions (see Remark~\ref{rem:Coxeter}).

\begin{theorem}\label{th:polar}
Let $\bar{M}$ be a Riemannian manifold and $H$ a connected group of isometries of $\bar{M}$ acting freely and polarly on $\bar{M}$ with section $\Sigma$. Assume that each $H$-orbit intersects $\Sigma$ exactly once, and is a minimal submanifold. 

Let $M$ be an immersed (resp.\ embedded) connected submanifold of codimension~$k$ in $\Sigma$, and $\mathcal{F}$ a singular Riemannian foliation of codimension $k$ on $\Sigma$. Consider the sets
\[
H\cdot M=\{h(p):h\in H, p\in M\} \qquad \text{and}\qquad H\cdot \mathcal{F}=\{H\cdot L: L\in \mathcal{F}\}.
\]
Then:
\begin{enumerate} [{\rm (i)}]
\item $H\cdot M$ is an immersed (resp.\ embedded) connected submanifold of codimension~$k$ in~$M$.
\item \label{item:II} The second fundamental form of $H\cdot M$ is $H$-equivariant and satisfies 
\[
\qquad\;\II_{H\cdot M}(X,Y)=\II_M(X,Y) , \quad \II_{H\cdot M}(U,V)=(\II_{H\cdot p}(U,V))^\perp \quad \text{and}\quad \II_{H\cdot M}(X,U)=0,
\]
for all $X,Y\in T_pM$, $U,V\in T_p(H\cdot p)$ and $p\in M$, where $(\cdot)^\perp$ denotes orthogonal projection onto the normal space of $H\cdot M$.
\item The mean curvature vector field of $H\cdot M$ is $H$-equivariant and, along $M$, coincides with that of $M$.
\item If $M$ has one of the following properties as a submanifold of $\Sigma$, so does $H\cdot M$ as a submanifold of $\bar{M}$: parallel mean curvature, minimal, (globally) flat normal bundle, isoparametric.
\item $H\cdot \mathcal{F}$ is a singular Riemannian foliation on $\bar{M}$ of codimension $k$.
\item If $\mathcal{F}$ has one of the following properties as a foliation on $\Sigma$, so does $H\cdot \mathcal{F}$ as a foliation on $\bar{M}$: polar, hyperpolar, isoparametric.
\item If $\cal{F}$ is the orbit foliation on $\Sigma$ of the action of a group of isometries $S$ of $\bar{M}$ leaving $\Sigma$ invariant and normalizing $H$, then $H\cdot \cal{F}$ is the orbit foliation of the isometric action of $HS=SH$ on $\bar{M}$.
\end{enumerate}
\end{theorem}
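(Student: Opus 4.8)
The plan is to start from the product structure induced by the action. Since $H$ acts freely, meets $\Sigma$ exactly once per orbit, and $\Sigma$ meets every orbit, the map $H\times\Sigma\to\bar M$, $(h,p)\mapsto h(p)$, is a diffeomorphism, and by polarity the tangent space splits orthogonally along $\Sigma$ as $T_p\bar M=T_p\Sigma\oplus\mathcal{V}_p$ with $\mathcal{V}_p=T_p(H\cdot p)$. Writing $H\cdot M$ as the image of $H\times M$ under this diffeomorphism gives (i) at once, with $T_p(H\cdot M)=T_pM\oplus\mathcal{V}_p$ for $p\in M$; since $\mathcal{V}_p$ is common to $T_p(H\cdot M)$ and $T_p\bar M$ and is orthogonal to $T_p\Sigma$, the normal space of $H\cdot M$ at $p$ is exactly the normal space $\nu_p^\Sigma M$ of $M$ inside $\Sigma$. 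The $H$-equivariance of every second-order invariant of $H\cdot M$ is automatic, as $H$ acts isometrically and preserves $H\cdot M$; hence it suffices throughout to compute along $M$.

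For (ii) I would compute $\II_{H\cdot M}(Z,W)=(\bar\nabla_Z\tilde W)^{\nu_p^\Sigma M}$ on the three types of pairs. For $X,Y\in T_pM$, extending inside $M\subset\Sigma$ and using that $\Sigma$ is totally geodesic gives $\bar\nabla_XY=\nabla^\Sigma_XY$, whose normal projection is $\II_M(X,Y)$. For $U,V\in\mathcal{V}_p$, splitting $\bar\nabla_UV$ into its components tangent and normal to the orbit shows the tangential part lies in $\mathcal{V}_p\subset T_p(H\cdot M)$ while the normal part is $\II_{H\cdot p}(U,V)$, so the projection onto $\nu_p^\Sigma M$ is $(\II_{H\cdot p}(U,V))^\perp$. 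The crux is the mixed term $\II_{H\cdot M}(X,U)=0$. Here I would take the $H$-equivariant extension $\tilde X$ of $X$, so that $[\xi,\tilde X]=0$ for the Killing field $\xi$ with $\xi_p=U$; this turns the mixed term into $(\bar\nabla_X\xi)^{\nu_p^\Sigma M}$. Pairing against any $W\in\nu_p^\Sigma M\subset T_p\Sigma$ and using the Killing equation gives $\langle\bar\nabla_X\xi,W\rangle=-\langle\bar\nabla_W\xi,X\rangle$; differentiating the orthogonality relation $\langle\xi,\,\cdot\,\rangle\equiv 0$ along $\Sigma$, together with total geodesy of $\Sigma$, shows $\bar\nabla_W\xi\perp T_p\Sigma\ni X$, so the pairing vanishes. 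This is the main obstacle, and the step where polar orthogonality and total geodesy of $\Sigma$ are essential.

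Part (iii) then follows by tracing (ii) over an orthonormal basis adapted to $T_pM\oplus\mathcal{V}_p$: the $T_pM$-trace gives the mean curvature of $M$, while the $\mathcal{V}_p$-trace gives the orthogonal projection of the mean curvature of the orbit $H\cdot p$, which vanishes since the orbits are minimal. For (iv) I would first record the analogous identification of normal connections: for $X\in T_pM$ and a normal field $\eta$ valued in $\nu^\Sigma M$, total geodesy gives $\nabla^{\perp}_X\eta=\nabla^{\perp,M}_X\eta$, while for vertical $U$ the same Killing-field argument as in (ii) shows that the $H$-equivariant extension of any normal field is $\nabla^\perp$-parallel in the direction $U$. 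Since, by equivariance, $H_{H\cdot M}$ is precisely the $H$-equivariant extension of $H_M$, these two facts yield parallel mean curvature (hence minimality as the special case $H_M=0$) and, applied to a parallel orthonormal normal frame of $M$, globally flat normal bundle. For the isoparametric case I would use the characterization by parallel submanifolds: because $\Sigma$ is totally geodesic, normal geodesics of $H\cdot M$ issuing from $M$ stay in $\Sigma$, so by equivariance the parallel submanifold $(H\cdot M)_\eta$ equals $H\cdot(M_\eta)$; applying (iii) to each $M_\eta$ shows $(H\cdot M)_\eta$ has the same (constant) mean curvature as $M_\eta$, whence $H\cdot M$ is isoparametric.

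Finally, for the foliation statements I would transport the same structure. The product bijection shows that $\{H\cdot L:L\in\cal{F}\}$ partitions $\bar M$ (two such leaves meeting would force their $\Sigma$-representatives to coincide), with regular codimension $\dim\Sigma-\dim L=k$; tangency of the Killing fields of $H$ together with the $H$-equivariant extensions of leafwise fields of $\cal{F}$ exhibits $H\cdot\cal{F}$ as a singular foliation, and the Riemannian condition follows because a geodesic orthogonal to a leaf can, after moving its foot point to $\Sigma$ by $H$, be taken inside the totally geodesic $\Sigma$, where it is orthogonal to all leaves of $\cal{F}$ and hence—using $T\Sigma\perp\mathcal{V}$—to all leaves $H\cdot L$, giving (v). For (vi), if $\sigma\subset\Sigma$ is a section of $\cal{F}$ then it is totally geodesic in $\bar M$, meets each $H\cdot L$ (since it meets $L$), does so orthogonally (as $T\sigma\perp TL$ in $\Sigma$ and $T\sigma\perp\mathcal{V}$), and has the correct dimension $k$, so it is a section of $H\cdot\cal{F}$; flatness of $\sigma$ is unchanged, and the isoparametric case reduces to (iv) applied to the regular leaves together with (v). Statement (vii) is then formal: since $S$ normalizes $H$, the set $HS=SH$ is a group of isometries, and for $q\in\Sigma$ one has $H\cdot(S\cdot q)=(HS)\cdot q$; as every point of $\bar M$ is $H$-equivalent to a point of $\Sigma$, the leaves $H\cdot L$ are exactly the $HS$-orbits.
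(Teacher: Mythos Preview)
Your proof is correct and follows the same overall strategy as the paper. Two tactical differences are worth noting. For the vanishing of the mixed term $\II_{H\cdot M}(X,U)$, the paper simply observes that the normal space of $H\cdot M$ along $M$ lies in $T\Sigma$, so $(\bar\nabla_X U)^\perp$ is the $T\Sigma$-component of $\bar\nabla_X U$ for $U$ normal to $\Sigma$, which is zero because the shape operator of the totally geodesic $\Sigma$ vanishes; this is shorter than your Killing-field computation, though yours is of course valid. Conversely, your argument that the $H$-equivariant extension of a normal field is $\nabla^\perp$-parallel in vertical directions is self-contained, whereas the paper quotes the general fact that equivariant normal fields along principal orbits of a polar action are parallel with respect to the orbit's normal connection.

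One minor omission in your isoparametric step: the paper works with the Heintze--Liu--Olmos definition, which requires that the nearby parallel submanifolds have \emph{parallel} (not merely constant) mean curvature and that $H\cdot M$ \emph{admits sections}. So you should invoke your already-proved preservation of parallel mean curvature (rather than just (iii)) when applying the argument to each $M_\eta$, and you need to add the observation that if $\Sigma'\subset\Sigma$ is a section for $M$ through $p$, then $h(\Sigma')$ is a section for $H\cdot M$ through $h(p)$.
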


Before proving Theorem~\ref{th:polar}, we recall some relevant definitions.

Let $M$ be an immersed submanifold of a Riemannian manifold $\bar{M}$. We say that $M$ has globally flat normal bundle if every normal vector to $M$ can be extended to a parallel (with respect to the normal connection) normal vector field $\xi\in \Gamma(\nu M)$ defined globally on $M$; here and henceforth $\Gamma(\cdot)$ denotes the sections of a vector bundle. If each point of $M$ admits a neighbourhood with globally flat normal bundle, we say that $M$ has flat normal bundle.

The submanifold $M$ is \emph{isoparametric}, in the sense of Heintze, Liu and Olmos~\cite{HLO:chev}, if:
\begin{enumerate}[{\rm (a)}]
\item $M$ has flat normal bundle,
\item locally, the parallel submanifolds to $M$ in $\bar{M}$ have parallel mean curvature, i.e.\ for any $p\in M$ there is an open neighbourhood $U$ of $p$ in $M$ and a parallel normal field $\xi$ on $U$ such that, for $r>0$ small enough, the parallel submanifold $U^r=\{\exp_p(r\xi_p):p\in U\}$ has parallel mean curvature, and
\item $M$ admits sections, i.e.\ through any point in $M$ there is a totally geodesic submanifold $\Sigma'$ of $\bar{M}$ intersecting $M$ orthogonally and with $\dim \Sigma'=\codim M$.
\end{enumerate} 

A decomposition $\cal{F}$ of an ambient manifold $\bar{M}$ into  injectively immersed submanifolds (called leaves) is a \emph{singular Riemannian foliation}, in the sense of Molino (cf.~\cite{Th:milan}), if it is
\begin{enumerate}[{\rm (a)}]
\item a transnormal system, i.e.\ every geodesic intersecting one leaf orthogonally is perpendicular to all leaves it meets, and
\item a singular foliation, i.e.\ the module $\mathfrak{X}_{\mathcal{F}}\subset \Gamma(T\bar{M})$ of vector fields that are everywhere tangent to the leaves of $\mathcal{F}$ satisfies $T_p L=\{X_p:X\in\mathfrak{X}_{ \mathcal{F}}\}$ for each $L\in \mathcal{F}$ and $p\in L$.
\end{enumerate}
The leaves of maximal dimension are called regular, and the others are singular. The codimension of a singular Riemannian foliation is the codimension of its regular leaves. In this paper, \emph{we use the word foliation to mean singular Riemannian foliation}.

A foliation $\cal{F}$ on $\bar{M}$ is polar if through each $p\in \bar{M}$ there is an immersed submanifold $\Sigma'$ that intersects all the leaves of $\cal{F}$ and always perpendicularly. Again, $\Sigma'$ is totally geodesic, its dimension agrees with the codimension of $\cal{F}$ and is called a section of~$\cal{F}$. If all sections of a polar foliation are flat (in its induced metric), the foliation is called hyperpolar. A polar foliation is isoparametric if its regular leaves are isoparametric submanifolds.

\begin{proof}[Proof of Theorem~\ref{th:polar}]
The restriction of the action to the section, i.e.\ $H  \times \Sigma \to \bar{M}$, $(h,q)\mapsto h(q)$, is a diffeomorphism, since the action is polar, free and each orbit intersects $\Sigma$ once. Hence, the restriction of the action to the immersed (resp.\ embedded) submanifold $M$ gives an immersion (resp.\ embedding) of $H\times M$ into $\bar{M}$ with image $H\cdot M$. The tangent space to $H\cdot M$ at $h(q)$, with $h\in H$, $q\in M$, is given by the orthogonal decomposition $T_{h(q)}(H\cdot M)= T_{h(q)}(H\cdot q)\oplus T_{h(q)}h(M)= h_*(T_q (H\cdot q))\oplus h_*(T_q M)$. The normal space at $h(q)$ is then $\nu_{h(q)}(H\cdot M)=h_*(\nu_q^\Sigma M)$, where $\nu_q^\Sigma M$ is the normal space to $M$ as submanifold of $\Sigma$. Hence, the codimension of $M$ in $\Sigma$ equals the codimension of $H\cdot M$ in $\bar{M}$. This~shows~(i).

Let us denote by $\bar{\nabla}$ the Levi-Civita connection of $\bar{M}$, by $(\cdot)^\perp$ the orthogonal projection onto the normal bundle $\nu (H\cdot M)$ of $H\cdot M$, and by $\II_{H\cdot M}$ and $\II_M$ the second fundamental forms of the submanifolds $H\cdot M$ and $M$, respectively. Since $M\subset \Sigma$ and $\Sigma$ is totally geodesic in $\bar{M}$, the second fundamental forms of $M$ as submanifold of $\bar{M}$ and of $\Sigma$ coincide, and the Levi-Civita connection of $\Sigma$ agrees with $\bar{\nabla}$ when applied to vector fields tangent to $\Sigma$. If $X$, $Y\in\Gamma (T(H\cdot M))$ and $h\in H$, since $h$ is an isometry and its differential preserves the tangent and normal bundles of $H\cdot M$, we~have 
\[
h_*\II_{H\cdot M}(X, Y)=h_*(\bar{\nabla}_X Y)^\perp=(h_*(\bar{\nabla}_X Y))^\perp=(\bar{\nabla}_{h_*X}h_*Y)^\perp=\II_{H\cdot M}(h_*X, h_*Y),
\]
which shows the $H$-equivariance of $\II_{H\cdot M}$. Moreover, since $M\subset\Sigma$ and $\Sigma$ is totally geodesic, $\II_{M}$ and $\II_{H\cdot M}$ coincide when applied to vectors tangent to $M$. Now, if $X\in\Gamma(TM)$, and $U$, $V$ are vector fields on $H\cdot M$ tangent to the $H$-orbits, then at any point $p\in M$ we have $\II_{H\cdot M}(U,V)=(\II_{H\cdot p}(U,V))^\perp$, and $\II_{H\cdot M}(X,U)=(\bar{\nabla}_X U)^\perp=0$, since the shape operator of $\Sigma$ vanishes. This proves (ii).

Denote by $\mathcal{H}$ the mean curvature vector field of $H\cdot M$. Given $h\in H$ and $p\in M$, let $\{e_i\}$ and $\{f_j\}$ be orthonormal bases of $T_pM$ and $T_p(H\cdot p)$, respectively. Then the collection of the vectors $h_*e_i$ and $h_*f_j$ is an orthonormal basis of $T_{h(p)}(H\cdot M)$, and hence 
\[\textstyle
\mathcal{H}_{h(p)}=\sum_i\II_{H\cdot M}(h_*e_i, h_*e_i)+\sum_j\II_{H\cdot M}(h_*f_j,h_*f_j)=h_*\sum_i\II_{H\cdot M}(e_i, e_i)=h_*\mathcal{H}_p,
\]
since the orbit $H\cdot p$ is minimal. This proves (iii).

Now, for any submanifold $L$ of $\bar{M}$, we denote by $\nabla^{L,\perp}$ the normal connection of $L$ as a submanifold of $\bar{M}$. Similarly as above, since $\Sigma$ is totally geodesic, the normal connection of $M$ as a submanifold of $\Sigma$ agrees with $\nabla^{M,\perp}$. 

Assume that $M$ has parallel mean curvature, that is, $\nabla^{M,\perp}_X \mathcal{H}=0$, for all $X\in \Gamma(TM)$. Let $X\in \Gamma(TM)$, $Y\in \Gamma(H\cdot p)$ for some $p\in M$, and $h\in H$. Then, on the one hand, $\nabla^{H\cdot M,\perp}_{h_*X}\mathcal{H}=(\bar{\nabla}_{h_*X}h_*\mathcal{H})^\perp=h_*(\bar{\nabla}_{X}\mathcal{H})^\perp=h_*\nabla^{M,\perp}_X \mathcal{H}=0$. On the other hand, since $\mathcal{H}$ is an $H$-equivariant normal vector field to the $H$-orbits, which are all principal, and the action of $H$ is polar, it follows that  $\mathcal{H}$ is parallel with respect to the normal connection of any $H$-orbit (see~\cite[Corollary~3.2.5]{BCO}), and hence $\nabla^{H\cdot M,\perp}_{Y}\mathcal{H}=(\bar{\nabla}_{Y}\mathcal{H})^\perp=0$. Altogether we have shown that $\nabla^{H\cdot M,\perp}\mathcal{H}=0$.

The extension of minimal submanifolds follows directly from (iii).

Assume that $M$ has globally flat normal bundle. Given a generic point $h(p)$ of $H\cdot M$, with $h\in H$ and $p\in M$, any normal vector $\eta$ to $H\cdot M$ at $h(p)$ is of the form $\eta=h_*\xi$, for some $\xi\in\nu_p M\cap T_p\Sigma$. By assumption, we can extend $\xi$ to a parallel normal vector field along $M$, and then $H$-equivariantly to $H\cdot M$, that is, $\xi_{g(q)}=g_*\xi_q$, for any $g\in H$ and $q\in M$. This gives a well-defined normal vector field $\xi$ along $H\cdot M$. The same argument used above to show that $\nabla^{H\cdot M,\perp}\mathcal{H}=0$ proves that $\xi$ is parallel with respect to $\nabla^{H\cdot M,\perp}$. 

Now let $M$ be isoparametric in $\Sigma$. We already know that $H\cdot M$ has flat normal bundle. Then, let $\xi$ be a parallel normal vector field along $H\cdot U$, for some open subset $U$ of $M$. By the argument above, $\xi$ must be $H$-equivariant. Then
\begin{align*}
(H\cdot U)^r&=\{\exp_{h(p)}(r\xi_{h(p)}):h\in H,\,p\in U\}=\{\exp_{h(p)}(r h_*\xi_{p}):h\in H,\,p\in U\}
\\
&=\{h(\exp_{p}(r \xi_{p})):h\in H,\,p\in U\}=H\cdot U^r.
\end{align*}
Since $U^r$ has parallel mean curvature by assumption, its canonical extension $H\cdot U^r=(H\cdot U)^r$ has parallel mean curvature as well. It remains to show that $H\cdot M$ admits sections. Let $h(p)\in H\cdot M$, with $h\in H$, $p\in M$, and let $\Sigma'$ be a section for the isoparametric submanifold $M$ of $\Sigma$ through the point $p$. Since the action of $H$ is polar, $h(\Sigma')$ is a section for $H\cdot M$ through $h(p)$. All in all, $H\cdot M$ is isoparametric. This completes the proof~of~(iv).

Now we show that, if $\mathcal{F}$ is a foliation on $\Sigma$, then $H\cdot \mathcal{F}$ is a foliation on $\bar{M}$. By definition, the leaves of $H\cdot \mathcal{F}$ are the canonically extended submanifolds of the leaves of $\mathcal{F}$. We show first that $H\cdot \mathcal{F}$ is transnormal. Let $\gamma$ be a geodesic in $\bar{M}$ intersecting one leaf $H\cdot L\in H\cdot \mathcal{F}$ orthogonally at a point $h(p)$, with $L\in \mathcal{F}$, $h\in H$ and $p\in M$. Then $\gamma$ lies in $h(\Sigma)$ and $h^{-1}\circ\gamma$ is a geodesic in $\Sigma$ intersecting $L$ at $p$ perpendicularly. If $\gamma$ meets another leaf $H\cdot L'\in H\cdot \mathcal{F}$, with $L'\in \mathcal{F}$, then $h^{-1}\circ\gamma$ meets $L'$, and by the transnormality of $\mathcal{F}$, it does so perpendicularly, and hence, $\gamma$ intersects $H\cdot L'$ orthogonally.

We prove now that $H\cdot \mathcal{F}$ is a singular foliation. First, the module $\mathfrak{Y}$ of Killing vector fields associated with the $H$-action is a submodule of $\mathfrak{X}_{H\cdot \mathcal{F}}$ such that $T_{h(p)}(H\cdot p)=\{Y_{h(p)}:Y\in\mathfrak{Y}\}$ for all $h\in H$ and $p\in \Sigma$. Second, the module $\tilde{\mathfrak{X}}_\mathcal{F}$ made of the vector fields $h_*X$, for $X\in\mathfrak{X}_\mathcal{F}$ and $h\in H$, is a submodule of $\mathfrak{X}_{H\cdot \mathcal{F}}$ such that $T_{h(p)} (h(L))=\{X_{h(p)}:X\in\tilde{\mathfrak{X}}_\mathcal{F}\}$ for any $h\in H$, $L\in\mathcal{F}$ and $p\in L$. Hence $\mathcal{Y}\oplus\tilde{\mathfrak{X}}_\mathcal{F}\subset \mathfrak{X}_{H\cdot \mathcal{F}}$ and then
\begin{align*}
T_{h(p)} (H\cdot L)&= T_{h(p)} h(L)\oplus T_{h(p)}(H\cdot p)=\{X_{h(p)}+Y_{h(p)}: X\in\tilde{\mathfrak{X}}_\mathcal{F}, \, Y\in \mathfrak{Y}\}
\\
&\subset\{Z_{h(p)}: Z\in\mathfrak{X}_{H\cdot \mathcal{F}}\},
\end{align*}
which implies that $H\cdot \mathcal{F}$ is a singular foliation. This shows (v).

The extension of polar and isoparametric foliations follows from the arguments used to prove (iv), whereas the extension of hyperpolar foliations is obvious. This shows (vi).

Finally, let $S$ be a group of isometries of $\bar{M}$ that leaves $\Sigma$ invariant and normalizes $H$. Then the group $HS=SH=\{hs:h\in H, s\in S\}$ is a group of isometries of $\bar{M}$. Let $\cal{F}$ be the orbit foliation of the $S$-action on $\Sigma$. If $L\in\cal{F}$ and $h(p)\in H\cdot L$, with $p\in L$, $h\in H$, then for any $h'\in H$ and $s\in S$ there is an $h''\in H$ such that $h's(h(p))=h'h''s(p)\in H\cdot L$, so $HS\cdot h(p)\subset H\cdot L$. Moreover, if $h'(q)$ is another point of $H\cdot L$, with $q\in L$ and $h'\in H$, then there is an $s\in S$ such that $s(p)=q$, and hence $h'(q)=h's(p)=h'sh^{-1}(h(p)) = h'h''s(h(p))$ for some $h''\in H$, and hence $H\cdot L\subset HS\cdot h(p)$. This concludes the proof.
\end{proof}

\begin{remark}\label{rem:Coxeter}
In Theorem~\ref{th:polar} we assumed that the $H$-action is free, polar, and such that each orbit meets the section only once. One could relax  these assumptions by replacing the freeness condition by the assumption that all $H$-orbits are principal. Equivalently, one could require the $H$-action to be Coxeter polar (in the sense of Grove and Ziller~\cite{GZ}) with no singular orbits. Obviously, the minimality of the orbits remains an essential assumption and, thus, the absence of singular orbits is a natural condition, in view of~\cite[\S4]{AR}. 
\end{remark}

\section{The horospherical decomposition of a noncompact symmetric space and the associated polar action}\label{sec:horospherical}
In this section we review the construction of the horospherical decomposition of a symmetric space of noncompact type, and present the associated polar action; see~\cite{BT:crelle} for~details.

Let $\bar{M}$ be a connected Riemannian symmetric space of noncompact type. Then $\bar{M}$ is diffeomorphic to $G/K$, where $G$ is the identity connected component of the isometry group of $\bar{M}$ and $K$ is the isotropy group at a base point $o\in \bar{M}$. Let $\g{g}$ and $\g{k}$ be the Lie algebras of $G$ and $K$, respectively. Then $\g{g}$ is a real semisimple Lie algebra, whereas $\g{k}$ is a maximal compact subalgebra of $\g{g}$. Let $\g{g}=\g{k}\oplus\g{p}$ be the corresponding Cartan decomposition, where $\g{p}$ is the orthogonal complement of $\g{k}$ in $\g{g}$ with respect to the Killing form of $\g{g}$. 

Let $\g{a}$ be a maximal abelian subspace of $\g{p}$. For each covector $\alpha$ on $\g{a}$ we define the subspace $\g{g}_\alpha=\{X\in\g{g}:[H,X]=\alpha(H)X\text{ for all }H\in\g{a}\}$ of $\g{g}$. Each covector $\alpha$ such that $\alpha\neq 0$ and $\g{g}_\alpha\neq 0$ is called a restricted root, and every nonzero $\g{g}_\alpha$ is a restricted root space. Thus, we have the restricted root space decomposition $\g{g}=\g{g}_0\oplus \left( \bigoplus_{\alpha\in\Sigma}\g{g}_\alpha\right)$,
where $\Sigma$ is the set of restricted roots, $\g{g}_0=\g{k}_0\oplus\g{a}$, and $\g{k}_0$ is the centralizer of $\g{a}$ in $\g{k}$. 

Let $r$ be the rank of $\bar{M}$ and $\Lambda=\{\alpha_1,\dots,\alpha_r\}$ a set of simple roots of $\Sigma$. Denote by $\Sigma^+$ the corresponding set of positive roots. Then $\g{n}=\bigoplus_{\alpha\in\Sigma^+}\g{g}_\alpha$ is a nilpotent subalgebra, $\g{a}\oplus\g{n}$ is a solvable subalgebra, and $\g{g}=\g{k}\oplus\g{a}\oplus\g{n}$ is an Iwasawa decomposition of $\g{g}$.

The subalgebra $\g{k}_0\oplus\g{a}\oplus\g{n}$ is called a minimal parabolic subalgebra of $\g{g}$. A subalgebra $\g{q}$ of $\g{g}$ is called parabolic if it contains a minimal parabolic subalgebra of $\g{g}$ up to conjugation. It turns out that the conjugacy classes of parabolic subalgebras of $\g{g}$ are parametrized by the subsets $\Phi$ of the set $\Lambda$ of simple roots. Below we explain the explicit construction.

Let $\Phi\subset \Lambda$, denote by $\Sigma_\Phi$ the root subsystem of $\Sigma$ generated by $\Phi$, and put $\Sigma_\Phi^+=\Sigma_\Phi\cap\Sigma^+$. We define the following subalgebras of $\g{g}$:
\[\g{l}_\Phi=\g{g}_0\oplus\bigl(\bigoplus_{\alpha\in\Sigma_\Phi}\g{g}_\alpha\bigr), \quad \g{g}_\Phi=[\g{l}_\Phi,\g{l}_\Phi], \quad  \g{n}_\Phi=\bigoplus_{\alpha\in\Sigma^+\setminus\Sigma^+_\Phi}\g{g}_\alpha \quad \text{and}\quad \g{a}_\Phi=\bigcap_{\alpha\in\Phi}\ker\alpha,\]
which are, respectively, reductive, semisimple, nilpotent and abelian.
The centralizer and normalizer of $\g{a}_\Phi$ in $\g{g}$ is $\g{l}_\Phi$, and $[\g{l}_\Phi,\g{n}_\Phi]\subset \g{n}_\Phi$. Then, the Lie algebra $\g{q}_\Phi=\g{l}_\Phi\oplus\g{n}_\Phi$ is the  parabolic subalgebra of $\g{g}$ associated with the subset $\Phi$ of~$\Lambda$. 
As limit cases we obtain the minimal parabolic subalgebra $\g{q}_\emptyset=\g{k}_0\oplus\g{a}\oplus\g{n}$ if $\Phi=\emptyset$, and $\g{q}_\Lambda=\g{g}$ if $\Phi=\Lambda$.

Let $A$, $N$, $N_\Phi$ and $G_\Phi$ be the connected subgroups of $G$ with Lie algebras $\g{a}$, $\g{n}$, $\g{n}_\Phi$ and $\g{g}_\Phi$, respectively. The groups $AN=NA$ and $A_\Phi N_\Phi=N_\Phi A_\Phi$ are solvable subgroups of $G$ with Lie algebras $\g{a}\oplus\g{n}$ and $\g{a}_\Phi\oplus\g{n}_\Phi$. 
The orbit $B_\Phi=G_\Phi\cdot o$ is a connected totally geodesic submanifold of $\bar{M}$. $B_\Phi$ is itself a noncompact symmetric space whose rank agrees with the cardinality of $\Phi$, and the identity connected component of its isometry group is contained in $G_\Phi$. $B_\Phi$ is called the \emph{boundary component} of $\bar{M}$ corresponding to the choice of $\Phi\subset\Lambda$. 

Finally, the \emph{horospherical decomposition} of $\bar{M}=G/K$ is the analytic diffeomorphism
\[
A_\Phi\times N_\Phi\times B_\Phi \to \bar{M},\qquad (a, n, p)\mapsto an(p).
\]

It was proved by Berndt, D\'iaz-Ramos and Tamaru that the $A_\Phi N_\Phi$-action on $\bar{M}$ is polar with section $B_\Phi$, see~\cite[Proposition~4.2]{BDT:JDG}. It turns out that the orbits of this action are all congruent to each other by isometries in $G_\Phi$. Moreover, Tamaru~\cite{Ta:mathann} showed that these orbits are Einstein solvmanifolds and minimal submanifolds of $\bar{M}$. They are totally geodesic if and only if $\Phi$ and $\Lambda\setminus\Phi$ are orthogonal. All these results imply the following. 

\begin{theorem}\label{th:Phi}
The action of $A_\Phi N_\Phi$ on $\bar{M}$ is free and polar with section~$B_\Phi$. Each orbit is a minimal submanifold of $\bar{M}$ that intersects $B_\Phi$ exactly once.
\end{theorem}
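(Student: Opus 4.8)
The plan is to recognize that this theorem is essentially a repackaging of established facts together with two elementary consequences of the horospherical decomposition. The polarity of the $A_\Phi N_\Phi$-action, together with the fact that $B_\Phi$ is a section, is exactly \cite[Proposition~4.2]{BDT:JDG}, and the minimality of every orbit is Tamaru's result \cite{Ta:mathann}; I would simply invoke these. Thus the only genuinely \emph{new} content to establish is that the action is \emph{free} and that each orbit meets $B_\Phi$ in \emph{exactly one} point. Both will be extracted from the single fact that the horospherical decomposition map
\[
A_\Phi\times N_\Phi\times B_\Phi\to\bar{M},\qquad (a,n,p)\mapsto an(p),
\]
is an analytic diffeomorphism, and in particular injective.

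First I would prove the ``exactly once'' statement. Since $B_\Phi$ is a section it meets every orbit at least once, so it suffices to rule out two distinct intersection points. If $p,p'\in B_\Phi$ lie in the same orbit, then $p'=an(p)$ for some $a\in A_\Phi$, $n\in N_\Phi$ (recall $A_\Phi N_\Phi=N_\Phi A_\Phi$, so every group element is of this form). On the other hand, writing $p'$ itself through the decomposition gives $p'=e\cdot e(p')$ with base point $p'\in B_\Phi$. Injectivity of the decomposition map forces $a=e$, $n=e$ and $p=p'$, so the two points coincide.

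Next I would prove freeness, first at a base point $p\in B_\Phi$. If $g\in A_\Phi N_\Phi$ fixes $p$, write $g=an$ with $a\in A_\Phi$, $n\in N_\Phi$; then $an(p)=p=e\cdot e(p)$, and injectivity again yields $a=e$, $n=e$, hence $g=e$. To pass to an arbitrary point $x\in\bar{M}$, I would use that $B_\Phi$ is a section, so $x=g_0(p)$ for some $g_0\in A_\Phi N_\Phi$ and $p\in B_\Phi$; since $A_\Phi N_\Phi$ is a group, the isotropy at $x$ is the conjugate $g_0\,(A_\Phi N_\Phi)_p\,g_0^{-1}$ of the (trivial) isotropy $(A_\Phi N_\Phi)_p$ at $p$, and is therefore trivial as well.

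I do not expect any serious obstacle here, since the analytic heart of the statement---the polarity, the identification of the section, and the minimality of the orbits---is imported from \cite{BDT:JDG} and \cite{Ta:mathann}. The only point requiring a little care is to verify freeness globally rather than merely along the section, which is handled by the conjugacy of isotropy groups within a single orbit together with the fact that every orbit meets $B_\Phi$.
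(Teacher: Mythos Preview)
Your proposal is correct and follows exactly the paper's approach: the paper simply cites \cite[Proposition~4.2]{BDT:JDG} for polarity with section $B_\Phi$, cites \cite{Ta:mathann} for minimality of the orbits, and states that the theorem follows from these together with the horospherical decomposition diffeomorphism. You have merely spelled out the (straightforward) details of how injectivity of the horospherical map yields freeness and the ``exactly once'' property, which the paper leaves implicit.
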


\section{The canonical extension for noncompact symmetric spaces}\label{sec:applications}

In view of Theorem~\ref{th:Phi}, we can apply the canonical extension method given in Theorem~\ref{th:polar} to the polar action of a solvable Lie group $A_\Phi N_\Phi$, for each subset $\Phi$ of simple roots, on the corresponding symmetric space of noncompact type $\bar{M}=G/K$. Thus, we obtain a method to extend certain kinds of submanifolds and singular Riemannian foliations from each boundary component $B_\Phi$ to the whole $G/K$. Moreover, since the identity connected component of the isometry group of $B_\Phi$ is contained in $G_\Phi$, and $G_\Phi$  normalizes $A_\Phi N_\Phi$, it follows that isometric actions by connected groups can be extended from $B_\Phi$ to $G/K$. Below we present some applications of the canonical extension method in this context. 

\subsection{Inhomogeneous isoparametric hypersurfaces in spaces of type $(BC_r)$}
In \cite{DD:MZ} and \cite{DD:adv} many examples of inhomogeneous isoparametric families of hypersurfaces in the noncompact rank one symmetric spaces of nonconstant curvature were constructed. More precisely, we know only one example in the complex hyperbolic $3$-space $\C H^3$ and in the Cayley hyperbolic plane $\mathbb{O} H^2$, and uncountably many examples in the complex hyperbolic spaces $\C H^n$, $n\geq 4$, and in the quaternionic hyperbolic spaces $\H H^n$, $n\geq 3$. Except for the example in $\mathbb{O} H^2$, the corresponding hypersurfaces have nonconstant principal curvatures.

Whereas the Cayley hyperbolic plane does not appear as a proper boundary component of any irreducible symmetric space, the complex and quaternionic spaces do appear as boundary components $B_\Phi$ in some symmetric spaces $G/K$ with restricted root system of $(BC_r)$-type, when one takes $\Phi=\{\alpha_r\}$, where $\alpha_r$ is the shortest simple root; see \cite[pp.~119 and~146]{Loos}. Thus, $B_\Phi=\C H^{n+1}$ if $G/K$ is the indefinite complex Grassmannian $SU_{r+n,r}/S(U_{r+n} U_r)$, $n\geq 1$, $r\geq 2$, $B_\Phi=\H H^{n+1}$ if $G/K$ is the indefinite quaternionic Grassmannian $Sp_{r+n,r}/Sp_{r+n} Sp_r$, $n\geq 1$, $r\geq 2$, $B_\Phi=\C H^3$ for $SO_{2r+1}(\H)/U_{2r+1}$, $r\geq 2$, and $B_\Phi=\C H^5$ for the complexified Cayley hyperbolic plane $E_6^{-14}/SO_2 SO_{10}$.

Therefore, the extension of the examples in \cite{DD:MZ} and \cite{DD:adv} yields inhomogeneous isoparametric families of hypersurfaces with nonconstant principal curvatures in these higher rank spaces. For instance, in $E_6^{-14}/SO_2SO_{10}$,  in $SU_{r+n,r}/S(U_{r+n} U_r)$ and in $Sp_{r+n,r}/Sp_{r+n} Sp_r$, for $n\geq 3$, $r\geq 2$, we obtain uncountably many noncongruent examples, whereas in $SU_{r+2,r}/S(U_{r+2} U_r)$ and in $SO_{2r+1}(\H)/U_{2r+1}$, $r\geq 2$, we just obtain one example.

The extended examples admit a direct description as follows. Let $\alpha$ be any simple root in $\Lambda$ and $\g{w}$ any proper subspace of $\g{g}_{\alpha}$. Consider the connected subgroup $S_{\g{w}}$ of $AN$ with Lie algebra $\g{a}\oplus\bigl(\bigoplus_{\lambda\in\Sigma^+\setminus\{\alpha\}}\g{g}_\lambda\bigr)\oplus\g{w}$. The orbit of $S_{\g{w}}$ through the base point $o\in G/K$ and the distance tubes around it define an isoparametric family of hypersurfaces in $G/K$. This family is the canonical extension of an isoparametric family in $B_\Phi$, with $\Phi=\{\alpha\}$. This description extends in a natural way the construction proposed in \cite{DD:MZ} and \cite{DD:adv} to higher rank symmetric spaces of noncompact type. Now, when $G/K$ is of $(BC_r)$-type and $\alpha=\alpha_r$ is the shortest root in $\Lambda$, for many choices of the subspace $\g{w}\subset \g{g}_{\alpha_r}$, the resulting hypersurfaces are inhomogeneous with nonconstant principal curvatures; see \cite{DD:MZ} and \cite{DD:adv} for details. 

\subsection{Inhomogeneous isoparametric foliations on indefinite real Grassmannians}
Wu~\cite{Wu:tams} reduced the classification problem of isoparametric foliations on real hyperbolic spaces $\R H^n$ to the corresponding problem in spheres. In particular, given a geodesic sphere in $\R H^n$ (which is isometric to a standard sphere $S^{n-1}$), we can foliate it by an isoparametric foliation, and then extend it to the whole $\R H^n$ by exponentiating in directions normal to the geodesic sphere. This foliation is isoparametric in $\R H^n$. If one starts with an inhomogeneous foliation, the resulting foliation on $\R H^n$ is inhomogeneous as well. The only known examples of inhomogeneous (irreducible) isoparametric foliations on spheres $S^{n-1}$ are the ones of codimension one constructed by Ferus, Karcher and M\"unzner for $n-1\equiv 3\, (\mathrm{mod}\, 4)$ and $n\geq 16$; see~\cite{Th:milan} and \cite{FKM}. This yields examples of inhomogeneous isoparametric foliations of codimension two on $\R H^n$, for $n\geq 16$. 

The only irreducible noncompact symmetric spaces of rank at least two that admit a real hyperbolic space of dimension at least $16$ as boundary component are the indefinite real Grassmannians $SO^0_{r+n,r}/SO_{r+n} SO_r$, for $n\geq 15$ and $r\geq 2$; again, see~\cite{Loos}. In this case, $B_\Phi=\R H^{n+1}$, for $\Phi=\{\alpha_r\}$, where $\alpha_r$ is the shortest simple root. The canonical extension of the inhomogeneous isoparametric foliations on $\R H^n$ mentioned above produces examples of inhomogeneous isoparametric foliations of codimension two on indefinite real Grassmannians of sufficiently high dimension. Contrary to the examples in $\R H^n$, which were reducible, the extended foliations do not have any totally geodesic leaf.

\subsection{Polar non-hyperpolar actions}
Recently, Kollross and Lytchak~\cite{KL} concluded that polar actions on irreducible symmetric spaces of compact type and rank at least two are hyperpolar. This result is false in the noncompact setting, as shown in \cite[Proposition~4.2]{BDT:JDG}: the group $V\times N_\Phi$, where $V$ is any subgroup of $A_\Phi$ and $\Phi$ is a non-empty subset of simple roots, acts polarly, but not hyperpolarly, on the corresponding noncompact symmetric~space. 

However, the canonical extension method allows to enlarge any polar non-hyperpolar action on a rank one symmetric space arising as a boundary component of a higher rank space $G/K$ to a polar non-hyperpolar action on $G/K$. This is so because any section of the original action is also a section of the extended action. Moreover, there are many orbit inequivalent polar actions of cohomogeneity at least two on rank one noncompact symmetric spaces; see~\cite{Wu:tams} and~\cite{DDK} for classifications in $\R H^n$ and $\C H^n$, and \cite{Ko:Lie} for examples in $\H H^n$ and $\mathbb{O} H^2$. Thus, we get many new examples of polar non-hyperpolar actions on many higher rank noncompact symmetric spaces. Just observe that we do not get any new example if we extend a polar action on $\R H^2$, but we do get new examples for those higher rank symmetric spaces admitting some simple root space with dimension at least two.

\subsection{Minimal and CMC hypersurfaces}
The canonical extension method allows to obtain many examples of minimal and constant mean curvature (CMC) hypersurfaces in all symmetric spaces of noncompact type and rank at least two by extending minimal and CMC hypersurfaces from any boundary component isometric to a hyperbolic space. When such boundary component is a real hyperbolic plane $\R H^2$, the extended examples are known; see~\cite[Section~6]{BDT:JDG}. However, if the boundary component has at least dimension three, then it admits many nontrivial examples of minimal and CMC hypersurfaces. For instance, one can restrict to cohomogeneity one examples, that is, to hypersurfaces admitting an isometric action of cohomogeneity one (as in~\cite{DD:tams} for real hyperbolic spaces, or in~\cite{GG} for complex hyperbolic spaces). In this particular case, the extension method produces new examples of cohomogeneity one minimal or CMC hypersurfaces in any symmetric space of noncompact type admitting a simple root space of dimension at least two.

\enlargethispage{1em}


\end{document}